\renewcommand{\D}{\mathbb{D}}
\begin{document}

\title*{The Hilbert matrix done right}
\author{A. Montes-Rodr\'iguez and\\ J. A. Virtanen}
\institute{A. Montes-Rodr\'iguez \at University of Seville, Seville, Spain, \email{amontes@us.es}
\and J. A. Virtanen \at University of Reading, Reading, United Kingdom \email{j.a.virtanen@reading.ac.uk}}

%
%

\maketitle

\abstract*{We give very simple proofs of the classical results of Magnus and Hill on the spectral properties of the Hilbert matrix
$$
H = \left ( {1 \over i+j+ 1 } \right )_{i,j\geq 0}
$$
which defines a bounded linear operator on the sequence space $\ell^2$. In particular, we use the Mehler-Fock transform to find the spectrum and the latent eigenfunctions of the Hilbert matrix, that is, we show that the spectrum of $H$ is $[0,\pi]$ with no eigenvalues (Magnus' result) and describe all complex sequences $x$ such that $Hx=\mu x$ for some complex number $\mu$ (Hill's result).
}

\abstract{We give very simple proofs of the classical results of Magnus and Hill on the spectral properties of the Hilbert matrix
$$
H = \left ( {1 \over i+j+ 1 } \right )_{i,j\geq 0}
$$
which defines a bounded linear operator on the sequence space $\ell^2$. In particular, we use the Mehler-Fock transform to find the spectrum and the latent eigenfunctions of the Hilbert matrix, that is, we show that the spectrum of $H$ is $[0,\pi]$ with no eigenvalues (Magnus' result) and describe all complex sequences $x$ such that $Hx=\mu x$ for some complex number $\mu$ (Hill's result).
}

\keywords{Hilbert matrix, Mehler-Fock transform, spectrum, latent eigenfunctions}
\subclass{44A15, 15B05, 30H10}

\section{Introduction}
\label{sec:1}
The classical Hilbert matrix 
$$
	H = \left ( {1 \over i+j+ 1 } \right )_{i,j\geq 0}
$$
was introduced in 1894 by Hilbert in connection with orthogonal polynomials studied by P\'olya. Magnus \cite{MR0041358}  found that the spectrum of the Hilbert matrix is the interval $ [0,\pi]$.   Previously, Hardy had observed that $H$ has no point spectrum. Subsequently, in [4] Hill solved the eigenvalue problem for the Hilbert matrix. In this note, using the Mehler-Fock transform, we will provide very simple proofs of both of these classical theorems. In particular, our approach shows that there is a close connection between the Mehler-Fock transform and the Hilbert matrix. Section 2 deals with Hill's Theorem on latent eigenfunctions while  Section 3 deals with Magnus' Theorem on the spectrum of the Hilbert matrix.

\section{The eigenfunctions}
\label{sec:2}

\subsection{The Hardy space $\mathcal{H}^2(\D)$}

In many situations, it is natural to translate the problem at hand to a space of functions in which many other tools become more readily available. Let $\D$ denote the open unit disk of the complex plane. Recall that the Hardy space $\mathcal H^2(\D)$ is the space of functions 
$$
	f(z)=\sum_{n=0}^\infty a_n z^n 
$$ 
analytic on $\D$ for which the norm
for which the norm 
$$
	\Vert f \Vert_{\mathcal H^2} ^2= \sum_{n=0}^\infty \vert a_n \vert ^2  
$$
is finite. It is obvious that $\mathcal H^2(\mathbb D)$ is isometrically isomorphic to $\ell^2$. In addition, it is well known that the functions in the Hardy space have boundary values almost everywhere (see, e.g., \cite{MR0924157}), and, in fact, the norm above has the integral representation
$$
\Vert f \Vert_{\mathcal H^2}^2= {1\over 2 \pi } \int_{-\pi}^\pi  \vert f(e^{i\theta } )\vert^2 \, d \theta.
$$
We recall that given two functions $f(z)= \sum_{n=0}^\infty a_n z ^n $ and $g(z)= \sum_{n=0}^\infty b_n z^n $, their inner product is given by 
$$
\langle f , g \rangle_{\mathcal H^2}= \sum_{n=0}^\infty a_n \bar   b_n= {1\over 2 \pi } \int_{-\pi }^\pi f(e^{i\theta}) \overline { g(e^{i\theta})} \, d \theta.
$$
We keep the same notation for the Hilbert matrix  $H$ when it is acting on $\mathcal H^2(\mathbb D)$, that is,
$$
 H  f (z)=\sum_{n=0}^\infty \left (  \sum_{m=0}^\infty {a_m \over n+m+1 } \right )z^n.
$$ 

\subsection{The latent eigenvalues of the Hilbert matrix}

We  will now state  Hill's theorem for the Hilbert matrix, see \cite[Thm.~1]{MR0121651}. 

\begin{theorem}\label{Hill}
For $0<\Re \mu \leq 1/2$,  the sequence $\{x_n\}_{n\geq 0}$ defined by 
$$
x_n=x_n( \mu)= \sum_{k=0}^n \binom {n}{k} (-1)^k {\Gamma(k+\mu) \Gamma(k+1-\mu) \over \Gamma(k+1)^2}
\quad \text { for } n\geq 0$$
satisfies
\begin{equation}\label{2.1}
	H \{x_n\}=\left \{{ \pi\over \sin (\pi \mu ) } x_n \right  \} 
\end{equation}
and $\{x_n\}\notin \ell^2$
\end{theorem}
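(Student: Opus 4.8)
The plan is to transport the problem to the Hardy space $\mathcal H^2(\D)$, on which $H$ acts as $f(z)\mapsto\sum_n\big(\sum_m\frac{a_m}{n+m+1}\big)z^n$, and to identify the generating function $f(z):=\sum_{n\ge0}x_n(\mu)\,z^n$ explicitly. Writing each quotient of Gamma functions in the definition of $x_n$ in terms of Pochhammer symbols and invoking the reflection formula $\Gamma(\mu)\Gamma(1-\mu)=\pi/\sin(\pi\mu)$, the $k$-th summand of $x_n$ becomes $\binom nk(-1)^k\,\frac{\pi}{\sin(\pi\mu)}\,\frac{(\mu)_k(1-\mu)_k}{(k!)^2}$; performing the binomial transform via $\sum_{n\ge k}\binom nk z^n=z^k(1-z)^{-k-1}$ (valid near $z=0$, where everything converges absolutely), recognising Gauss's series, and using the standard identity ${}_2F_1(\mu,1-\mu;1;y)=P_{-\mu}(1-2y)$ for the Legendre function $P_{-\mu}(x)={}_2F_1\!\big(\mu,1-\mu;1;\tfrac{1-x}{2}\big)$, one arrives at
\[
  f(z)=\frac{\pi}{\sin(\pi\mu)}\,\frac{1}{1-z}\,P_{-\mu}\!\left(\frac{1+z}{1-z}\right).
\]
For $z\in\D$ the argument $\frac{1+z}{1-z}$ lies in the right half-plane, off the cut $(-\infty,-1]$ of $P_{-\mu}$, so the right-hand side is analytic on $\D$ and $f$ extends there; in particular (singularity analysis at $z=1$, see below) $x_n(\mu)=O(n^{-\Re\mu})$.

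Given this decay estimate, the identity $\frac{1}{i+j+1}=\int_0^1 t^{i+j}\,dt$ and Fubini's theorem produce the integral representation
\[
  Hf(z)=\int_0^1\frac{f(t)}{1-tz}\,dt,\qquad z\in\D ,
\]
and substituting $u=\frac{1+t}{1-t}$ one finds, after a short simplification, that the factor $\frac{1}{1-z}$ comes out and, with $w:=\frac{1+z}{1-z}$, the desired relation $Hf=\frac{\pi}{\sin(\pi\mu)}f$ reduces to the single integral identity
\[
  \int_1^\infty\frac{P_{-\mu}(u)}{u+w}\,du=\frac{\pi}{\sin(\pi\mu)}\,P_{-\mu}(w),\qquad \Re w>0 .
\]

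This identity is the heart of the matter, and I would establish it by an ODE--uniqueness argument. Let $\mathcal L y=\big((1-x^2)y'\big)'+\mu(\mu-1)\,y$ be the Legendre operator, so that $\mathcal L P_{-\mu}=0$. A one-line computation gives the symmetry $\mathcal L_w\big(\tfrac{1}{u+w}\big)=\mathcal L_u\big(\tfrac{1}{u+w}\big)$ (the terms $\tfrac{2(w-u)}{(u+w)^2}$ and $\tfrac{2(u^2-w^2)}{(u+w)^3}$ cancel). Differentiating the left-hand side under the integral sign and then integrating by parts in $u$ --- Lagrange's identity for the self-adjoint operator $\mathcal L$ --- the interior term vanishes because $\mathcal L_u P_{-\mu}=0$, and the boundary term vanishes at $u=1$ because $1-u^2=0$ there, and at $u=\infty$ because $P_{-\mu}(u)\to0$ (both modes $u^{-\mu}$ and $u^{\mu-1}$ decay, as $0<\Re\mu<1$) faster than the weight $1-u^2$ grows. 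Hence $F(w):=\int_1^\infty\frac{P_{-\mu}(u)}{u+w}\,du$ solves Legendre's equation on $\Re w>0$, so $F=A\,P_{-\mu}+B\,Q_{-\mu}$ for constants $A,B$. Now $F$ and $P_{-\mu}$ are analytic at $w=1$ while $Q_{-\mu}$ has a logarithmic singularity there, which forces $B=0$. The constant $A$ is pinned down by letting $w\to\infty$: for $0<\Re\mu<\frac12$ the substitution $u=wv$, the asymptotics $P_{-\mu}(wv)\sim c\,(wv)^{-\mu}$, and the beta integral $\int_0^\infty\frac{v^{-\mu}}{1+v}\,dv=\frac{\pi}{\sin(\pi\mu)}$ give $A=\frac{\pi}{\sin(\pi\mu)}$; the remaining case $\Re\mu=\frac12$ follows by analytic continuation in $\mu$. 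Together with the preceding reduction, this yields $Hf=\frac{\pi}{\sin(\pi\mu)}f$, i.e. $H\{x_n\}=\{\frac{\pi}{\sin(\pi\mu)}\,x_n\}$.

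Finally, that $\{x_n\}\notin\ell^2$ I would read off from the closed form of $f$ and the large-argument behaviour $P_{-\mu}(w)=O\big(|w|^{-\Re\mu}+|w|^{\Re\mu-1}\big)$ (with an extra logarithm when $\Re\mu=\frac12$): singularity analysis at $z=1$ gives $x_n\asymp n^{-\Re\mu}$, up to such a logarithm or a bounded oscillatory factor, so $\sum_n|x_n|^2\asymp\sum_n n^{-2\Re\mu}=\infty$ precisely because $\Re\mu\le\frac12$; equivalently $f\notin\mathcal H^2$. The real obstacle is the integral identity $\int_1^\infty P_{-\mu}(u)(u+w)^{-1}\,du=\frac{\pi}{\sin(\pi\mu)}P_{-\mu}(w)$ itself: one needs the decay of the Legendre function to discard the boundary term at infinity, and the degenerate endpoint $\Re\mu=\frac12$ of the asymptotics is best handled by analyticity in $\mu$ rather than head-on.
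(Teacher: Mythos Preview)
Your argument is correct, and its architecture coincides with the paper's: you pass to $\mathcal H^2(\D)$, obtain the closed form $f_\mu(z)=\frac{1}{1-z}P_{-\mu}\!\big(\tfrac{1+z}{1-z}\big)$ (up to the harmless scalar), invoke the integral representation $Hf(z)=\int_0^1\frac{f(t)}{1-tz}\,dt$, substitute $u=\frac{1+t}{1-t}$, and reduce everything to the single identity
\[
\int_1^\infty\frac{P_{-\mu}(u)}{u+w}\,du=\frac{\pi}{\sin(\pi\mu)}\,P_{-\mu}(w),\qquad \Re w>0.
\]
The genuine difference is how this identity is established. The paper does not prove it ab initio: it recognises the case $\mu=\tfrac12+it$ as a known Mehler--Fock formula (quoted from Lebedev) and then extends to the strip $0<\Re\mu\le\tfrac12$ by observing that both sides are entire in $\mu$. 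You instead give a self-contained ODE/uniqueness proof: the symmetry $\mathcal L_w\frac{1}{u+w}=\mathcal L_u\frac{1}{u+w}$ together with Lagrange's identity shows $F(w)$ solves Legendre's equation, regularity at $w=1$ kills the $Q_{-\mu}$ component, and the beta integral fixes the constant. Your route is longer but avoids any appeal to tables or to the Mehler--Fock machinery; the paper's route is shorter and, as is its stated aim, advertises the link between the Hilbert matrix and the Mehler--Fock transform. For the non-$\ell^2$ assertion, the paper simply cites Hill and a later reference, whereas your singularity analysis at $z=1$ gives an explicit argument; this is a mild bonus of your approach.
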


\begin{remark}
We observe that for each complex number $M$ with $\Re M>0$, the equation $M=\pi / \sin(\pi \mu)$ has exactly one solution $\mu$  satisfying  $0<\Re \mu \leq 1/2$,  except for $M \in (0,\pi)$,  which have two solutions, but in such a case, the corresponding eigenfunctions for $\mu=1/2\pm i t$, $t>0$,   coincide.  
\end{remark}

\subsection{The eigenfunctions as Legendre functions}

Let us look for a compact form of the eigenfunctions. To this end, consider the function
\begin{equation}\label{2.2}
f_\mu  (z)=\sum_{n=0}^\infty  \sum_{k=0}^n \binom {n}{k} (-1)^k {\Gamma(k+\mu) \Gamma(k+1-\mu) \over \Gamma(k+1)^2 }z^n.
\end{equation}
Changing the order  of summation, we obtain
\begin{align*}
	f_\mu (z) &= \sum_{k=0}^\infty  \sum_{n=k}^\infty \binom {n}{k} (-1)^k {\Gamma(k+\mu) \Gamma(k+1-\mu) \over \Gamma(k+1) ^2}z^n \\
	&={1 \over 1-z}\sum_{k=0}^\infty { \Gamma(k+\mu) \Gamma(k+1-\mu) \over \Gamma(k+1)^2} \left (-z\over 1-z\right) ^k\\ 
	&= {\pi\over\sin(\pi \mu)}{1\over 1-z}\, _2F_1 \left (\mu,1-\mu, 1;  {-z \over 1-z}\right ),
\end{align*}
where 
$$
_2F_1(a,b;c,z)=\sum_{n=0}^\infty { \Gamma(a+n)\Gamma (b+n) \Gamma(c) \over \Gamma(a)\Gamma(b)\Gamma(\lambda +n)n ! }z^n
$$ 
is the hypergeometric function of parameters $a,b,c$. 
Now, applying Kummer's first formula, see \cite[Thm.~20, p.~60]{MR0107725}, for instance, we see that
\begin{equation}\label{2.3}
	f_ \mu(z)={\pi \over  \sin(\pi \mu )}(1-z)^{\mu-1}\, _2F_1(\mu,\mu ; 1 ;z).
\end{equation}
We point out here that although Kummer's formula is in principle true for $\vert z \vert <1$ and $\Re z<1/2$, since the series in \eqref{2.2} is analytic on a neighborhood of $0$ and the function in \eqref{2.3} is analytic on $\D$, we find that the equality holds on the whole unit disk $\D$. 

Since the eigenvalues are defined up to a non-zero scalar multiple, we may normalize redefining  
\begin{equation}\label{2.4}
	f_ \mu  (z)= (1-z)^{\mu-1}\, _2F_1(\mu,\mu ; 1 ;z).
\end{equation}
Looking at the coefficients of the eigenfunctions $f_\mu$, one easily verifies that the above eigenfunctions are not in $\mathcal H^2(\mathbb D)$, see \cite[Thm.~1]{MR0121651} and also \cite{MR2996436} for a different proof. Therefore, the eigenfunctions are latent. 
There are several ways to prove that the eigenfunctions have the form given in \eqref{2.4}; for instance, in \cite{MR2996436} instead of manipulating sums, the theory of ordinary differential equations was used.

Finally, using the definition of the Legendre functions, see \cite[pp.~165--167]{MR0350075}, and the fact that $P _ {-\mu}= P_{\mu-1}$, we have  
\begin{equation}\label{2.5}
	f_{\mu }(z)={1 \over 1- z } P_{\mu -1} \left ( 1+z \over 1-z \right )
\end{equation}
for $0<\Re \mu \leq 1/2$. 

\subsection{The Mehler-Fock transform}

Given a measurable function  $f:  [1,\infty) \to \mathbb C$, its Mehler-Fock transform is defined by 
$$
\hat f (t)=\mathcal P  f(t) = \int_1^\infty f(x) P_{it-1/2} (x)   \, dx ,  \quad {t \geq 0}, 
$$
whenever the integral exists. Here $P_{\nu }  $  is the  Legendre function of the first kind of order $\nu$.   A sufficient condition for the integral above to exist is that  $f(x)/\sqrt x$ belongs to $L^1[1,\infty)$, see \cite[p.~108]{MR2254107}.  Under suitable conditions, the Mehler-Fock transform has an 
inverse. For instance, if $\sqrt t  \hat  f(t)$  is in $L^1(\mathbb R^+)$, then 
$$
f(x) = \mathcal P^{-1}\hat f (x) = \int_0^\infty t \tanh(\pi t) P_{it-1/2}(x) \hat f (t) \, dt, \quad x \geq 1,
$$
see \cite[Thm.~1.9.53]{MR2254107}. 
There is an extensive development of the properties of the  Mehler-Fock transform,  see for instance  \cite{MR2254107}, where further references can be found. It is also well known that the Mehler-Fock transform has applications in Mathematical Physics, where it is used to solve  Dirichlet problems on the sphere and conical surfaces, see  the book by Lebedev \cite{MR0350075}. 

\subsection{An integral representation of the eigenfunctions}

The proof of Theorem 2.1 relies on the following formula, 
\begin{equation}\label{2.6}
	P_{it-1/2}(y)={\cosh  (\pi t) \over \pi }\int_1^\infty {P_{it-1/2}(x) \over x+y} \, dx ,  \quad \text { \rm for } y \geq 1  \text  { \rm and } t \geq 0,
\end{equation}
see the book by Lebedev \cite[p.~202, Ex.~12]{MR0350075}. To prove Theorem~\ref{Hill}, we need an integral representation of the eigenfunctions of the Hilbert matrix. To this end, we consider for each $z \in \mathbb D$ the function 
$$
	\varphi_z(x) =  {1 \over  x(1-z)+1+z } , \quad \text {\rm  for } x \geq 1,
$$
and obtain the following result.

\begin{theorem}\label{Mehler-Fock-rep}
The function $f_{i t +1/2 }$, $t \geq 0  $, can be represented via the Mehler-Fock transform. Indeed,  
$$
f_{it +1/2 }(z)=   {\text {\rm cosh } (\pi t) \over \pi } \mathcal P  \varphi_z(t)  \quad  \text {\rm  for } z \in \D. 
$$ 
In addition, for $0 < \Re \mu \leq 1/2$, we have 
$$
f_\mu(z) = {\sin(\pi \mu)  \over \ \pi} \int_ 1 ^\infty  P _ {\mu-1} (x)   \varphi_ z(x) \, dx .  
$$
\end{theorem}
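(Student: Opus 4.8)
The plan is to reduce both displayed identities of Theorem~\ref{Mehler-Fock-rep} to a single integral representation of the Legendre function $P_{\mu-1}$ and then insert that into the closed form \eqref{2.5}. The first observation is that the two formulas say the same thing: with $\mu = it+1/2$ one has $\mu-1 = it-1/2$, so $P_{\mu-1} = P_{it-1/2}$, while $\sin(\pi\mu) = \sin(\pi/2 + i\pi t) = \cosh(\pi t)$, and since $\mathcal P\varphi_z(t) = \int_1^\infty \varphi_z(x)\,P_{it-1/2}(x)\,dx$ by definition, the first identity is exactly the case $\Re\mu = 1/2$ of the second, rewritten in Mehler-Fock notation. (That $\mathcal P\varphi_z$ exists is clear from $|\varphi_z(x)| \le ((1-|z|)x)^{-1}$, which also shows $\varphi_z(x)/\sqrt{x}\in L^1[1,\infty)$.) So it suffices to prove
$$
f_\mu(z) = \frac{\sin(\pi\mu)}{\pi}\int_1^\infty P_{\mu-1}(x)\,\varphi_z(x)\,dx, \qquad 0 < \Re\mu \le 1/2,\ z\in\D.
$$

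Next I would extend \eqref{2.6} to a general degree. Since $\cosh(\pi t) = -\sin(\pi(it-1/2))$, formula \eqref{2.6} reads
$$
P_\nu(y) = -\frac{\sin(\pi\nu)}{\pi}\int_1^\infty \frac{P_\nu(x)}{x+y}\,dx, \qquad y \ge 1,
$$
for every $\nu$ with $\Re\nu = -1/2$, and I claim it continues to hold for all $\nu$ in the strip $-1 < \Re\nu < 0$. Fix $y\ge 1$. The left side is entire in $\nu$, and the large-$x$ behaviour of $P_\nu(x)$, a combination of the powers $x^\nu$ and $x^{-1-\nu}$, shows that $x\mapsto P_\nu(x)/(x+y)$ lies in $L^1[1,\infty)$ for every $\nu$ in the strip, with an $L^1$-bound locally uniform in $\nu$; hence by Morera's theorem the right side is analytic on the strip as well. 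The two analytic functions agree on the ray $\{\,it-1/2 : t\ge 0\,\}$ by \eqref{2.6}, and this set has a limit point in the strip, so the identity theorem forces them to agree throughout $-1 < \Re\nu < 0$. (Alternatively one may quote such a representation from a table of Legendre integrals and skip this step.)

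Now fix $\mu$ with $0 < \Re\mu \le 1/2$ and set $\nu = \mu-1$, so $-1 < \Re\nu \le -1/2$ and $-\sin(\pi(\mu-1)) = \sin(\pi\mu)$; the identity above reads $P_{\mu-1}(y) = \frac{\sin(\pi\mu)}{\pi}\int_1^\infty \frac{P_{\mu-1}(x)}{x+y}\,dx$ for $y\ge 1$. For $z\in(0,1)$ put $y = (1+z)/(1-z) > 1$; since $x+y = (x(1-z)+1+z)/(1-z) = ((1-z)\varphi_z(x))^{-1}$, dividing by $1-z$ and invoking \eqref{2.5} gives
$$
f_\mu(z) = \frac{1}{1-z}\,P_{\mu-1}\!\left(\frac{1+z}{1-z}\right) = \frac{\sin(\pi\mu)}{\pi}\int_1^\infty P_{\mu-1}(x)\,\varphi_z(x)\,dx
$$
for all real $z\in(0,1)$. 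It then remains to extend this to $\D$ by analyticity. On a compact set $K\subset\D$ with $r := \max_{z\in K}|z| < 1$ one has, for $x\ge 1$, $|x(1-z)+1+z| = |(x+1)-z(x-1)| \ge (x+1)-r(x-1) \ge x(1-r)$, whence $|\varphi_z(x)P_{\mu-1}(x)| \le C_K\,x^{-1-\Re\mu}$, which is integrable since $\Re\mu > 0$; so the right-hand integral defines an analytic function of $z$ on $\D$. As it agrees with $f_\mu$ (analytic on $\D$ by \eqref{2.4}) on the interval $(0,1)$, the two coincide on all of $\D$. This proves the second formula and with it the first.

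The trigonometric bookkeeping and the substitution $y = (1+z)/(1-z)$ are immediate, so the real content lies in the two analytic-continuation steps. The more delicate one is the continuation in the degree $\nu$: it requires a uniform $L^1[1,\infty)$-domination of $x\mapsto P_\nu(x)/(x+y)$ over compact subsets of the strip, which rests on the precise two-term large-$x$ asymptotics of the Legendre function of the first kind (both terms genuinely matter, one dominating near $\Re\nu = 0$ and the other near $\Re\nu = -1$, with a mild logarithmic subtlety on the line $\Re\nu = -1/2$). It is also the reason the argument must be arranged as ``continue in $\nu$ with $y$ real, then substitute and continue in $z$'': formula \eqref{2.6} as stated supplies only $y\in[1,\infty)$.
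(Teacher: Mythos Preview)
Your proof is correct and follows essentially the same route as the paper's: combine the closed form \eqref{2.5} with Lebedev's formula \eqref{2.6}, then analytically continue in both $z$ and the Legendre degree. The only difference is organizational---you continue in $\nu$ first (to the strip $-1<\Re\nu<0$) and then in $z$, whereas the paper continues in $z$ first and then in $\mu$, asserting both sides of \eqref{2.7} are entire in $\mu$; your strip restriction is actually the more careful claim, since the integral on the right of \eqref{2.7} only converges absolutely for $0<\Re\mu<1$.
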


\begin{proof}
First of all for each $z \in \mathbb D $, we have $\vert \varphi_z(x) \vert = O(x^{-1})$ as $x \to +\infty$ and of course $\varphi_z(x)$ is locally integrable on $[1,\infty)$. Thus $\mathcal P \varphi_z$ is well defined. 

Next assume that $0\leq z<1$. Then $(1+z)/(1-z)\geq 1$. Thus using (2.6)  in the first equality below and (2.6) in the second, we find 
\begin{align*}
	f_{it+1/2}(z) &= {1 \over 1-z} P_{it-1/2}\left ( 1+z \over 1-z \right )\\
	&= {1\over 1-z} {\text { \rm cosh} ( \pi t) \over \pi } \int_1^\infty {P_{it-1/2}(x) \over x+(1+z)/(1-z)} \, dx\\
	&= {\text { \rm cosh} ( \pi t) \over \pi } \int_1^\infty {P_{it-1/2}(x) \over x(1-z)+1+z} \, dx .
\end{align*} 
Since both sides are analytic functions of $z$ on $\mathbb D$, by the identity principle,  it follows that the equality above holds true for each  $z \in \mathbb D$ and we have proved the first part of the statement of the theorem. 

Next, for  $\mu=it+1/2$, {$t\geq 0$},  we  have to show that  the identity 
\begin{equation}\label{2.7}
	f_\mu (z)= {1 \over 1-z } P_{\mu-1}\left ( 1+z \over 1-z \right )
	= {\sin ( \pi \mu) \over \pi} \int_1^\infty {P_{\mu-1}  \left ( x \right ) \over x(1-z)+1+z}  \, dx
\end{equation}
holds true. Observe that due to the assymptotics  of $P_{\mu}$,  see [2], the integral on the right hand side above exists. 

 Since for fixed $z \in \mathbb D$, one can easily show that both sides of (2.7) are entire functions of the parameter  $\mu$, it follows, by the identity principle for analytic functions,  that the identitity in (2.7)  holds  true for each  $\mu \in \mathbb C$.
 The proof is complete.
 \end{proof}

\subsection{Proof of Theorem \ref{Hill}}

We are now in a position to provide a simple proof of  Theorem 2.1.  We need the following integral representation of the Hilbert matrix
\begin{equation}\label{2.8}
	H f(z)= \int_0^1   {f(s) \over 1- s z} \, ds,
\end{equation}
which can be easily verified, see \cite{MR2996436}.   We point out here that in the proof of Theorem~\ref{Mehler-Fock-rep},  we have not made use of the fact that $f_{it+1/2}$ are  eigenfunctions of the Hilbert matrix $H$.    

\begin{proof}[Proof of Theorem \ref{Hill}]
First, consider $\mu =i t+1/2$, $ t \geq 0$. Using Theorem~\ref{Mehler-Fock-rep} in the first and fifth equalities below, Fubini's Theorem in the second equality, formula  \eqref{2.5} in the third equality  and   the change of variables $u=(1+s)/(1-s)$ in the forth equality,  we have 
\begin{align*}
	(H \mathcal P\varphi_z)(t) &= \int_0^1 {1 \over 1-s z} \int_1^\infty{ P_{it-1/2}(x) \over x(1-s)+1+s } \, dx \, ds\\ 
	&= \int_0^1 {1 \over (1-s)(1-s z)} \int_1^\infty{ P_{it-1/2}(x) \over x+(1+s)/(1-s) } \, dx \, ds\\
	&= {\pi \over \cosh (\pi t) }  \int _0^1 {  { P_{it-1/2}\left  ( 1+s \over 1-s \right ) \over(1-s)( 1-sz)}  } ds\\
	&= {\pi \over \cosh (\pi t) } \int_1^\infty{ P_{it-1/2} (u)  \over u (1-z)+1+z} du\\
	&= { \pi \over \cosh(\pi t)} ( \mathcal P \varphi_z)(t).
\end{align*} 
Finally, for general $0<\Re \mu \leq 1/2$, we have to prove 
$$
\int_0^1 {1 \over 1-s z} \int_1^\infty{ P_{\mu}(x) \over x(1-s)+1+s } \, dx \, ds = {\pi \over \sin \pi \mu} \int_1^\infty{ P_{\mu}(x) \over x(1-z)+1+z} \, dx , 
 $$  which follows from the fact that both sides are entire functions of the parameter $\mu$ and  the identity principle for analytic functions, since we have already proved that the equality is true for $\mu=it+1/2$, $t>0$.  
The proof is complete.
\end{proof}

\section{The spectral measure of $H$}

In this section, we will provide a new proof of Magnus' theorem. Indeed,  it is clear that the Hilbert matrix is a self-adjoint operator, so it is enough to find its spectral measure. 

\subsection{The Hardy space of the right half plane and the Gelfand transform}

Before proceeding further we need to introduce the Hardy space of the right  half plane.   We set $\mathbb C^+= \{ z \in \mathbb C: \Re z >0\}$. Then the Hardy space of the right-hand plane $ \mathcal H^2(\mathbb C^+)$ consists of the functions $f$ analytic on $\mathbb C^+$ for which 
$$
	\displaystyle{\sup_{x>0}} \,  {1\over 2 \pi } \int_0^\infty \vert f(x+iy) \vert ^2 dy <\infty.
$$
 It is well known that the functions in $ \mathcal H^2(\mathbb C ^+)$ have boundary values almost everywhere and indeed $\mathcal H^2(\mathbb C^ +)$ is a Hilbert space. In fact,  the norm is given by
$$
	\Vert f \Vert ^2_{\mathcal H^2(\mathbb C^+)}  
	={1\over2  \pi } \int_{-\infty }^{\infty} \vert f(i y)  \vert ^2  \, dy . 
$$
In addition, the map 
$$
	(\mathcal G f)(w)={2\over 1+w} f  \left ( {w-1\over w+1} \right )
$$
is an isometric isomorphism from  $\mathcal H^2(\D)$ onto $\mathcal H^2(\mathbb C^+)$, see \cite[p.~106]{MR0133008}.

Next for each $f \in \mathcal H^2(\D)$, we define  
$$
	\Phi f (t) = \langle f, f _{it+1/2} \rangle _{\mathcal H^2(\D)}= \frac1{2\pi} \int_{-\pi}^{\pi} f(e^{i\theta}) \overline{f _{it+1/2}(e^{i\theta})}\, d\theta.
$$
Since the eigenfunctions $f_{it+1/2}$ are not in $\mathcal H^2(\D)$, in principle $\Phi f $ is not well defined, but in \cite{CMFT} it is shown that $\Phi f$ a well-defined isometric isomorphism between $\mathcal H^2(\D)$ and $L^2(\mathbb R^+,w(t) \, dt )$, where $w(t)=2\pi \tanh \pi t/\sinh \pi t$.  Indeed, $\Phi= M_\phi  \mathcal P G $, where $M_\phi$ is the operator of multiplication by $\phi(t)=(2\pi)^{-1}\cosh \pi t $, see \cite{CMFT}. Now, we have

\begin{theorem}
The Hilbert matrix acting on $\mathcal H^2(\D)$ is isometrically similar to multiplication by $\psi(t)= \pi / \cos \pi t$ acting on $L^2(\mathbb R^+, w(t) \, dt)$. 
\end{theorem}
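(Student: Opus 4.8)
The plan is to use the factorisation $\Phi=M_\phi\,\mathcal P\,G$ recorded above and to identify the operator $GHG^{-1}$ acting on $\mathcal H^2(\mathbb C^+)$, where $\mathcal P$ is the Mehler--Fock transform. Since $M_\phi$ commutes with the operator $M_\psi$ of multiplication by $\psi(t)=\pi/\cosh(\pi t)=\pi/\sin(\pi(it+\tfrac12))$, the identity $\Phi H\Phi^{-1}=M_\psi$ is equivalent to $\mathcal P\,(GHG^{-1})\,\mathcal P^{-1}=M_\psi$, that is, to the statement that the Mehler--Fock transform diagonalises $GHG^{-1}$ with eigenvalue $\psi(t)$. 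Because $H$ is bounded on $\mathcal H^2(\D)$, $M_\psi$ is bounded on $L^2(\mathbb R^+,w\,dt)$ (as $0<\psi(t)\le\pi$), and $\Phi$ is an isometric isomorphism, it suffices to verify $\Phi Hf=\psi\cdot\Phi f$ on the dense subspace of polynomials; writing $\Phi f=\phi\cdot\mathcal P Gf$ and putting $h=Gf$, this reduces to proving
$$
\mathcal P\bigl(GHG^{-1}h\bigr)(t)=\psi(t)\,(\mathcal P h)(t)
$$
for $h$ in the dense linear span of the functions $G(z^n)$, which are rational and $O(w^{-1})$ at infinity, so that all the integrals that follow converge absolutely and every interchange of integration is justified.

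First I would compute $GHG^{-1}$ explicitly. From $(Gf)(w)=\tfrac{2}{1+w}\,f\!\left(\tfrac{w-1}{w+1}\right)$ one gets $(G^{-1}h)(z)=\tfrac{1}{1-z}\,h\!\left(\tfrac{1+z}{1-z}\right)$; substituting this into the integral representation \eqref{2.8}, $(Hg)(z)=\int_0^1 g(s)/(1-sz)\,ds$, and applying $G$ once more, the elementary identity $1-s\tfrac{w-1}{w+1}=\tfrac{w(1-s)+1+s}{w+1}$ cancels the linear-fractional prefactors, and the change of variables $u=\tfrac{1+s}{1-s}$ turns the resulting single integral into
$$
\bigl(GHG^{-1}h\bigr)(w)=\int_1^\infty\frac{h(u)}{u+w}\,du.
$$
Thus, transported by $G$, the Hilbert matrix becomes the integral operator on $[1,\infty)$ with kernel $1/(u+w)$. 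This change-of-variables bookkeeping, though entirely elementary, is the step that demands the most care.

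The heart of the matter is then a single line: by Fubini, the Mehler--Fock transform of $GHG^{-1}h$ is
$$
\mathcal P\bigl(GHG^{-1}h\bigr)(t)=\int_1^\infty h(u)\left(\int_1^\infty\frac{P_{it-1/2}(w)}{u+w}\,dw\right)du,
$$
and the inner integral equals $\tfrac{\pi}{\cosh(\pi t)}\,P_{it-1/2}(u)$ by formula \eqref{2.6} applied with $y=u\ge1$. Hence $\mathcal P(GHG^{-1}h)(t)=\tfrac{\pi}{\cosh(\pi t)}\,(\mathcal P h)(t)=\psi(t)\,(\mathcal P h)(t)$, so \eqref{2.6} is precisely the statement that the Mehler--Fock transform diagonalises $GHG^{-1}$. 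Multiplying through by $\phi(t)$ yields $\Phi Hf=\psi\cdot\Phi f$ for every polynomial $f$, and by density together with boundedness of $H$ and $M_\psi$ this extends to all of $\mathcal H^2(\D)$; therefore $\Phi H\Phi^{-1}=M_\psi$, the claimed isometric similarity. As a bonus, since $\psi$ maps $[0,\infty)$ continuously and bijectively onto $(0,\pi]$, one reads off immediately that $\sigma(H)=[0,\pi]$ with empty point spectrum, which is Magnus' theorem.

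I expect the only genuine difficulty to be technical, not conceptual: fixing a dense class on which the two Fubini steps and all the integrals are unambiguously valid --- the functions $G(z^n)$ work, being rational with $O(w^{-1})$ decay, while the large-argument asymptotics of $P_{it-1/2}$ and the normalisation $P_{it-1/2}(1)=1$ control the inner integrals --- and then passing to the closure using boundedness. One could instead argue directly from $\Phi f(t)=\langle f,f_{it+1/2}\rangle_{\mathcal H^2(\D)}$ and the fact, from Theorem~\ref{Hill}, that $f_{it+1/2}$ is a latent eigenfunction of $H$ with eigenvalue $\pi/\sin(\pi(it+\tfrac12))$; but that route must make sense of the pairing $\langle\,\cdot\,,f_{it+1/2}\rangle$ for $f_{it+1/2}\notin\mathcal H^2(\D)$, which is exactly the difficulty the factorisation of $\Phi$ sidesteps.
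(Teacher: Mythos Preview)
Your argument is correct, but it is not the route the paper takes. The paper's proof is a four-line computation using the inner-product definition $\Phi f(t)=\langle f,f_{it+1/2}\rangle_{\mathcal H^2(\D)}$: since $H$ is self-adjoint and $f_{it+1/2}$ has real Taylor coefficients, one simply moves $H$ across the pairing and invokes Theorem~\ref{Hill} to get $Hf_{it+1/2}=(\pi/\cosh\pi t)f_{it+1/2}$, whence $\Phi Hf=(\pi/\cosh\pi t)\Phi f$. The legitimacy of the pairing with the latent eigenfunction, which you flag as the sticking point of this approach, is not revisited here but is handled by the reference \cite{CMFT} already cited when $\Phi$ was introduced. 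By contrast, you work through the factorisation $\Phi=M_\phi\mathcal P G$, identify $GHG^{-1}$ explicitly as the kernel operator $h\mapsto\int_1^\infty h(u)/(u+w)\,du$, and diagonalise it by~\eqref{2.6}. Your computation is essentially the same chain of integrals---Fubini, formula~\eqref{2.6}, the substitution $u=(1+s)/(1-s)$---that the paper already carried out in the proof of Theorem~\ref{Hill}, just reorganised around $GHG^{-1}$ instead of around $H\mathcal P\varphi_z$; what you gain is a self-contained argument that bypasses both the latent pairing and any prior appeal to Hill's theorem, at the cost of repeating those manipulations rather than quoting them.
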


\begin{proof}
Observing that $f_{it+1/2}(z)$ has real coefficients for $t>0$, it follows that
\begin{align*}
	\Phi H f (t) &= \langle  H f, f_{it+1/2} \rangle _{\mathcal H^2(\D)}\\ 
	&= \langle  f, Hf_{it+1/2} \rangle _{\mathcal H^2(\D) }\\ 
	&= \left  \langle f, {\pi \over \cosh \pi t} f_{it+1/2}  \right \rangle_{\mathcal H^2(\D) }\\
	&= {\pi \over \cosh \pi t } \langle f, f_{it+1/2} \rangle _{\mathcal H^2(\D)}\\
	&= {\pi \over \cosh \pi t} \Phi f (t), 
\end{align*}
which completes the proof.
\end{proof}

Now, as in Magnus~\cite{MR0041358},  it immediately follows that the spectrum of  the Hilbert matrix $H$ is  $[0,\pi]$, since the spectrum of the multiplication operator by $\psi$  is the range of $\varphi$ and of course such an operator has no point spectrum.  

\begin{remark}
Magnus' result is also obtained recently in \cite{MR3479387} but the authors need to show that the Hilbert matrix commutes with a particular Jacobi matrix.
\end{remark}

The following corollary gives the spectral measure of $H$. 

\begin{corollary} 
The  Hilbert matrix $H$ acting on $\mathcal H^2(\D)$ is unitarily similar  to multiplication by $x$ in $L^2([0,\pi], dr)$, where   
$$
	dr(x) ={2\over \pi ^2} { \text {\rm arccosh} \,{ \pi \over x}  } dx.
$$
\end{corollary}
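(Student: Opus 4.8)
The plan is to read the corollary off from the previous theorem by a single change of spectral variable. By that theorem $H$ on $\mathcal{H}^2(\D)$ is unitarily equivalent to multiplication by $\psi(t):=\pi/\cosh(\pi t)$ on $L^2(\mathbb{R}^+, w(t)\,dt)$ — the $\pi/\cos\pi t$ in its statement being a misprint for $\pi/\cosh\pi t$, as the identity $\Phi H f(t)=\pi(\cosh\pi t)^{-1}\Phi f(t)$ derived in its proof shows, and likewise the displayed weight is to be read as $w(t)=2\pi t\tanh(\pi t)/\cosh(\pi t)$. Now $\psi$ is a strictly decreasing $C^\infty$ diffeomorphism of $(0,\infty)$ onto $(0,\pi)$, so if we set $x=\psi(t)$ and let $dr$ denote the push-forward of $w(t)\,dt$ under $\psi$, then $g\mapsto g\circ\psi^{-1}$ is a unitary operator from $L^2(\mathbb{R}^+, w(t)\,dt)$ onto $L^2((0,\pi), dr)$ carrying $M_\psi$ to multiplication by the independent variable $x$. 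Composing this with the unitary of the previous theorem yields the asserted unitary similarity, and only the identification of $dr$ remains.

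To compute $dr$ I would invert the substitution: from $\cosh(\pi t)=\pi/x$ we get $t=(1/\pi)\mathrm{arccosh}(\pi/x)$, hence $\sinh(\pi t)=\sqrt{\pi^2-x^2}/x$ and $\tanh(\pi t)=\sqrt{\pi^2-x^2}/\pi$, and, differentiating $\cosh(\pi t)=\pi/x$, $|dt/dx|=1/(x\sqrt{\pi^2-x^2})$. Substituting these into the change-of-variables formula $dr(x)=w((1/\pi)\mathrm{arccosh}(\pi/x))\,|dt/dx|\,dx$ and simplifying, the powers of $x$ and of $\sqrt{\pi^2-x^2}$ cancel, leaving exactly $dr(x)=(2/\pi^2)\,\mathrm{arccosh}(\pi/x)\,dx$; the factor $\mathrm{arccosh}(\pi/x)$ is forced because it equals $\pi t$ along the substitution. (If one prefers to keep the weight literally as displayed, the push-forward is a measure on $[0,\pi]$ mutually absolutely continuous with $dr$; since multiplication by $x$ on an $L^2$ space depends only on the equivalence class of the measure, the conclusion, with the stated density, still follows.)

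Finally, as a consistency check, an integration by parts (equivalently the substitution $x=\pi/\cosh s$) gives $\int_0^\pi \mathrm{arccosh}(\pi/x)\,dx=\pi^2/2$, so $\int_0^\pi dr=1$: the measure $dr$ is a probability measure, as it must be, being the scalar spectral measure of the unit cyclic vector $e_0\equiv 1$ of $H$. The only genuine work here is the bookkeeping of constants through the change of variables so that the clean closed form appears; the diffeomorphism property of $\psi$, the elementary Jacobian, and the verifying integral are all routine, so I expect no real obstacle beyond not mislaying the inverse Mehler--Fock weight $t\tanh(\pi t)$ that distinguishes the correct $w$ from the one displayed.
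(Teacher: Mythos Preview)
Your proof is correct and follows exactly the paper's own route: the authors' entire proof is the single sentence ``It is enough to perform the change of variables $x=\pi/\cosh(\pi t)$,'' and you have carried out precisely that substitution, supplying the Jacobian bookkeeping they omit. Your diagnosis of the typos is also right: the multiplier must be $\pi/\cosh(\pi t)$, and the weight that makes the change of variables land on $(2/\pi^2)\,\mathrm{arccosh}(\pi/x)\,dx$ is indeed $w(t)=2\pi t\tanh(\pi t)/\cosh(\pi t)$ rather than the displayed $2\pi\tanh(\pi t)/\sinh(\pi t)$; the total-mass check you include confirms this.
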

 
\begin{proof}
It is enough to perform the change of variables $x=\pi/\cosh(\pi t)$.
\end{proof}

\section*{Acknowledgments}
This work benefitted from discussions at the workshop ``Riemann-Hilbert problems, Toeplitz matrices, and applications,'' which was held at the American Institute of Mathematics (AIM) in March 2024. The authors thank AIM for providing an excellent scientific environment for their research. This work was further supported in part by Plan Nacional I+D+I  ref. PID2021-127842NB-I00, Junta de Andaluc\'{\i}a FQM-260, and  EPSRC grant EP/X024555/1.

\bibliographystyle{spmpsci}
\bibliography{references}
\end{document}